\def\frk{\frak}               
\def\Phi{{\frk n}}
\def\Phi{{\frk N}}
\def\opn#1#2{\def#1{\operatorname{#2}}} 
\opn\chara{char} \opn\length{\ell} \opn\pd{pd} \opn\rk{rk}
\opn\projdim{proj\,dim} \opn\injdim{inj\,dim} \opn\rank{rank}
\opn\depth{depth} \opn\grade{grade} \opn\height{height}
\opn\embdim{emb\,dim} \opn\codim{codim}
\opn\Tr{Tr} \opn\bigrank{big\,rank}
\opn\superheight{superheight}\opn\lcm{lcm}
\opn\trdeg{tr\,deg}
\opn\reg{reg} \opn\lreg{lreg} \opn\ini{in} \opn\lpd{lpd}
\opn\size{size}
\opn\div{div} \opn\Div{Div} \opn\cl{cl} \opn\Cl{Cl}
\opn\Spec{Spec} \opn\Supp{Supp} \opn\supp{supp} \opn\Sing{Sing}
\opn\Ass{Ass} \opn\Min{Min}
\opn\Ann{Ann} \opn\Rad{Rad} \opn\Soc{Soc}
\opn\Im{Im} \opn\Ker{Ker} \opn\Coker{Coker} \opn\Am{Am}
\opn\Hom{Hom} \opn\Tor{Tor} \opn\Ext{Ext} \opn\End{End}
\opn\Aut{Aut} \opn\id{id}
\opn\nat{nat}
\opn\pff{pf}
\opn\Pf{Pf} \opn\GL{GL} \opn\SL{SL} \opn\mod{mod} \opn\ord{ord}
\opn\Gin{Gin} \opn\Hilb{Hilb}
\opn\aff{aff} \opn\con{conv} \opn\relint{relint} \opn\st{st}
\opn\lk{lk} \opn\cn{cn} \opn\core{core} \opn\vol{vol}
\opn\link{link} \opn\star{star}
\opn\gr{gr}
\def\pot#1#2{#1[\kern-0.28ex[#2]\kern-0.28ex]}
\opn\dirlim{\underrightarrow{\lim}}
\opn\inivlim{\underleftarrow{\lim}}
\def\Implies{\ifmmode\Longrightarrow \else
        \unskip${}\Longrightarrow{}$\ignorespaces\fi}
\def\implies{\ifmmode\Rightarrow \else
        \unskip${}\Rightarrow{}$\ignorespaces\fi}
\def\iff{\ifmmode\Longleftrightarrow \else
        \unskip${}\Longleftrightarrow{}$\ignorespaces\fi}
\newtheorem{Theorem}{Theorem}[section]
\newtheorem{Lemma}[Theorem]{Lemma}
\newtheorem{Proposition}[Theorem]{Proposition}
\newtheorem{Example}[Theorem]{Example}
\newtheorem{Definition}[Theorem]{Definition}
\opn\Syz{Syz} \opn\Im{Im} \opn\Ker{Ker} \opn\Coker{Coker}
\opn\Am{Am} \opn\Hom{Hom} \opn\Tor{Tor} \opn\Ext{Ext} \opn\End{End}
\opn\Aut{Aut} \opn\id{id}
\opn\nat{nat}
\opn\pff{pf}
\opn\Pf{Pf} \opn\GL{GL} \opn\SL{SL} \opn\mod{mod} \opn\ord{ord}
\opn\Gin{Gin}\opn\min{min}
\opn\Hilb{Hilb}\opn\adeg{adeg}\opn\std{std}\opn\ip{infpt}
\opn\Pol{Pol}\opn\sdepth{sdepth}\opn\infpt{infpt}
\opn\depth{depth}\opn\sqdepth{sqdepth}\opn{\Mon}{Mon}
\let\epsilon\varepsilon
\let\phi=\varphi
\let\kappa=\varkappa
\def\qed{\ifhmode\textqed\fi
      \ifmmode\ifinner\quad\qedsymbol\else\dispqed\fi\fi}
\def\textqed{\unskip\nobreak\penalty50
       \hskip2em\hbox{}\nobreak\hfil\qedsymbol
       \parfillskip=0pt \finalhyphendemerits=0}
\def\dispqed{\rlap{\qquad\qedsymbol}}
\opn\dis{dis}
\def\pnt{{\raise0.5mm\hbox{\large\bf.}}}
\opn\Lex{Lex}
\begin{document}
\title
{Spanning Simplicial Complexes of Uni-Cyclic Multigraphs}

\author{Imran Ahmed, Shahid Muhmood}

\address{COMSATS Institute of Information Technology, Lahore, Pakistan}
\email{drimranahmed@ciitlahore.edu.pk}
\address{COMSATS Institute of Information Technology, Lahore, Pakistan}
\email{shahid\_nankana@yahoo.com}
 \maketitle
\begin{abstract} A multigraph is a nonsimple graph which is permitted to have multiple edges, that is, edges that have the same end nodes. We introduce the concept of spanning simplicial complexes $\Delta_s(\mathcal{G})$ of multigraphs $\mathcal{G}$, which provides a generalization of spanning simplicial complexes of associated simple graphs.  We give first the characterization of all spanning trees of a uni-cyclic multigraph $\mathcal{U}_{n,m}^r$ with $n$ edges including $r$ multiple edges within and outside the cycle of length $m$. Then, we determine the facet ideal $I_\mathcal{F}(\Delta_s(\mathcal{U}_{n,m}^r))$ of
spanning simplicial complex $\Delta_s(\mathcal{U}_{n,m}^r)$ and its primary decomposition. The Euler characteristic is a well-known
topological and homotopic invariant to classify surfaces. Finally, we device a formula for Euler characteristic of spanning
simplicial complex $\Delta_s(\mathcal{U}_{n,m}^r)$.
 \vskip 0.4 true cm
 \noindent
 \noindent
 \noindent
{\it Key words}: multigraph, spanning simplicial complex, Euler characteristic.\\
{\it 2010 Mathematics Subject Classification: Primary 05E25, 55U10, 13P10, Secondary 06A11, 13H10.}\\
\end{abstract}

\pagestyle{myheadings} \markboth{\centerline {\scriptsize Ahmed and
Muhmood}}
         {\centerline {\scriptsize Spanning Simplicial Complexes of Uni-Cyclic Multigraphs}}

\maketitle

\section{Introduction}

Let $\mathcal{G}=\mathcal{G}(V,E)$ be a multigraph  on the vertex set $V$ and edge-set $E$. A spanning tree of a multigraph $\mathcal{G}$ is a subtree of $\mathcal{G}$ that contains every vertex of $\mathcal{G}$. We represent the collection of all edge-sets of the spanning trees of a multigraph $\mathcal{G}$ by $s(\mathcal{G})$. The facets of spanning simplicial complex $\Delta_s(\mathcal{G})$ is exactly the edge set $s(\mathcal{G})$ of
all possible spanning trees of a multigraph $\mathcal{G}$. Therefore, the spanning simplicial complex
$\Delta_s(\mathcal{G})$ of a multigraph $\mathcal{G}$  is defined by
$$\Delta_s(\mathcal{G})=\langle F_k\,|\,F_k\in s(\mathcal{G})\rangle,$$
which gives a generalization of the spanning simplicial complex $\Delta_s(G)$ of an associated simple graph $G$. The spanning simplicial complex of a simple connected finite graph was firstly introduced by Anwar, Raza and Kashif in \cite{AZK}. Many authors discussed algebraic and combinatorial properties of spanning simplicial complexes of various classes of simple connected finite graphs, see for instance \cite{AZK}, \cite{KAR}, \cite{PLZ} and \cite{ZSG}.

Let $\Delta$ be simplicial complex of dimension $d$. We denote $f_i$ by the number of $i$-cells of simplicial complex $\Delta$.
Then, the Euler characteristic of $\Delta$ is given by
$$\chi(\Delta)=\sum_{i=0}^{d-1}(-1)^i f_i,$$
which is a well-known topological and homotopic invariant to classify surfaces,
see \cite{H} and \cite{R}.

The uni-cyclic multigraph $\mathcal{U}_{n,m}^r$ is a  connected graph
having $n$ edges including $r$ multiple edges within and outside the cycle of length
$m$. Our aim is to give some algebraic and topological
characterizations of spanning simplicial complex
$\Delta_s(\mathcal{U}_{n,m}^r)$.

In Lemma \ref{l1}, we give characterization of all spanning trees of
a uni-cyclic multigraph $\mathcal{U}_{n,m}^r$ having $n$ edges
including $r$ multiple edges and a cycle of length $m$. In
Proposition \ref{p1}, we determine the facet ideal
$I_\mathcal{F}(\Delta_s(\mathcal{U}_{n,m}^r))$ of spanning
simplicial complex $\Delta_s(\mathcal{U}_{n,m}^r)$ and its primary
decomposition. In Theorem \ref{t1}, we give a formula for Euler
characteristic of spanning simplicial complex
$\Delta_s(\mathcal{U}_{n,m}^r)$.

\section{Basic Setup}

A simplicial complex $\Delta$ on $[{n}]=\{1,\ldots, n\}$ is a
collection of subsets of $[n]$ satisfying the following
properties.\\
\noindent $(1)$ $\{j\}\in \Delta$ for all $j\in[n]$;\\
\noindent $(2)$ If $F\in \Delta$ then every subset of $F$ will
belong to $\Delta$ (including empty set).

The elements of $\Delta$ are called faces of $\Delta$ and the
dimension of any face $F\in\Delta$ is defined as $|F|-1$ and is
written as $\dim F$, where $|F|$ is the number of vertices of $F$.
The vertices and edges are $0$ and $1$ dimensional faces of $\Delta$
(respectively), whereas, $dim\ \emptyset= -1$. The maximal faces of
$\Delta$ under inclusion are said to be the facets of $\Delta$. The
dimension of $\Delta$ is denoted by $\dim\Delta$ and is defined by
$\dim\Delta=max\{\dim F\ |\ F\in\Delta\}$.

If $\{F_1,\ldots ,F_q\}$
is the set of all the facets of $\Delta$, then $\Delta=<F_1,\ldots
,F_q>$. A simplicial complex $\Delta$ is said to be pure, if all its
facets are of the same dimension.

A subset $M$ of $[n]$ is said to be a vertex
cover for $\Delta$ if $M$ has non-empty intersection with every
$F_k$. $M$ is said to be a minimal vertex cover for $\Delta$ if no
proper subset of $M$ is a vertex cover for $\Delta$.

\begin{Definition}
Let $\mathcal{G}=\mathcal{G}(V,E)$ be a multigraph  on the vertex set $V$ and edge-set $E$. A spanning tree of a multigraph $\mathcal{G}$ is a subtree of $\mathcal{G}$ that contains every vertex of $\mathcal{G}$.
\end{Definition}

\begin{Definition}
Let $\mathcal{G}=\mathcal{G}(V,E)$ be a multigraph  on the vertex set $V$ and edge-set $E$. Let $s(\mathcal{G})$ be the edge-set of all possible spanning trees of $\mathcal{G}$. We define a simplicial complex $\Delta_s(\mathcal{G})$ on $E$ such that the facets of $\Delta_s(\mathcal{G})$ are exactly the elements of $s(\mathcal{G})$, we call $\Delta_s(\mathcal{G})$ as the spanning simplicial complex of $\mathcal{G}$ and given by
$$\Delta_s(\mathcal{G})=\langle F_k\,|\,F_k\in s(\mathcal{G})\rangle.$$
\end{Definition}

\begin{Definition}
A uni-cyclic multigraph $\mathcal{U}_{n,m}^r$ is a connected graph
having $n$ edges including $r$ multiple edges within and outside the cycle of length
$m$.
\end{Definition}

Let $\Delta$ be simplicial complex of dimension $d$. Then, the chain complex $C _{\ast}
({\Delta})$ is given by
\begin{center}
$0 \rightarrow C_{d}(\Delta) ^{
\underrightarrow{\partial_{d}}} C_{d-1}(\Delta)^{\underrightarrow{\partial_{d-1}}} ... ^{\underrightarrow{\partial_{2}}}C_{1}({\Delta})^{
\underrightarrow{\partial_{1}}}C_{0}({\Delta})^{\underrightarrow\partial_{0}} 0$.
\end{center}
Each $C_{i}({\Delta})$ is a free abelian group of rank $f_{i}$.
The boundary homomorphism $\partial_{d}: C_{d}(\Delta)\rightarrow C_{d-1}(\Delta)$ is defined by
\begin{center}
$\partial_{d}(\sigma_{\alpha}^{d})=\sum_{i=0}^{d}(-1)^{i}\sigma_{\alpha}^{d}|_{[{v_{0},v_{1},...,\hat{v{i}},...,v_{d}}]}.$
\end{center}
Of course,
$$H_{i}(\Delta) = Z_{i}(\Delta) / B_{i}(\Delta) = Ker\partial_{i}/Im \partial_{i+1},$$
where $Z_i(\Delta)=Ker\,\partial_{i}$ and $B_i(\Delta)=Im\, \partial_{i+1}$ are the groups of simplicial $i$-cycles and simplicial $i$-boundaries, respectively.
Therefore,
\begin{center}
$rank\,H_{i}(\Delta) = rank\,Z_{i}(\Delta) -\,rank\,B_{i} (\Delta)$.
\end{center}
One can easily see that rank $B_{d}(\Delta)=0$ due to $B_{d}(\Delta)=0$. For each $i\geqslant 0$ there is an exact sequence
\begin{center}
$0 \rightarrow Z_{i}(\Delta)
\rightarrow C_{i}(\Delta)^{
\underrightarrow{\partial_{i}}}B_{i-1}({\Delta})\rightarrow 0$.
\end{center}
Moreover,
\begin{center}
$f_{i}= rank\, C_{i}(\Delta)=rank\, Z_{i}({\Delta})+rank\,
 B_{i-1}(\Delta)$.
 \end{center}
 Therefore, the Euler characteristic of $\Delta$ can be expressed as\\
$\chi(\Delta) =  \sum_{i=0}^{d} (-1) ^{i}f_{i} = \sum_{i=0}^{d}(-1)^{i} (rank\,Z_{i}(\Delta) + rank\,B_{i-1}(\Delta))$\\
    = $\sum_{i=0}^{d}(-1)^{i} rank\,Z_{i}(\Delta) + \sum_{i=0}^{d}(-1)^{i}rank\,B_{i-1}(\Delta)$.\\
Changing index of summation in the last sum and using the fact that\\ rank $B_{-1}(\Delta)=0=rank\, B_{d}(\Delta)$, we get\\
 $\chi(\Delta) =  \sum_{i=0}^{d} (-1)^{i}\,rank  \,Z_{i} (\Delta) + \sum_{i=0}^{d} (-1)^{i+1}\,rank\,
 B_{i} (\Delta)$\\
  $=\sum_{i=0}^{d}(-1)^{i}\,(rank\,Z_{i}(\Delta) - rank\,
 B_{i}(\Delta))$
     $= \sum_{i=0}^{d}(-1)^{i}\,rank\,H_{i}(\Delta)$.\\
Thus, the Euler characteristic of $\Delta$ is given by
$$\chi(\Delta) =\sum_{i=0}^{d}(-1)^{i}\,\beta_{i}(\Delta),$$
where $\beta_{i}(\Delta)=rank\,H_{i}(\Delta)$ is the $i$-th Betti number of $\Delta$, see \cite{H} and \cite{R}.

\section{Topological Characterizations of $\Delta_s(\mathcal{U}_{n,m}^r)$}
Let
$\mathcal{U}_{n,m}^r$ be a uni-cyclic multigraph having $n$
edges including $r$ multiple edges within and outside the cycle of length $m$. We fix the
labeling of the edge set $E$ of $\mathcal{U}_{n,m}^r$ as follows:\\
$E=\{e_{11},\ldots,e_{1t_1},\ldots,e_{r'1},\ldots,e_{r't_{r'}},
e_{(r'+1)1},\ldots,e_{m1}, e_{(m+1)1},\ldots,e_{(m+1)t_{m+1}},\\
\ldots, e_{(m+r'')1},\ldots,e_{(m+r'')t_{m+r''}},e_1, \ldots, e_v\}$,
where $e_{i1},\ldots,e_{it_i}$ are the multiple edges of $i$-th edge
of cycle with $1\leq i\leq r'$ while $e_{(r'+1)1},\ldots,e_{m1}$ are
the single edges of the cycle and $e_{j1},\ldots,e_{jt_j}$ are the
multiple edges of the $j$-th edge outside the cycle with $m+1\leq
j\leq m+r''$, moreover, $e_1, \ldots, e_v$ are single edges appeared
outside the cycle.

We give first the characterization of $s(\mathcal{U}_{n,m}^r)$.

\begin{Lemma}\label{l1}
Let $\mathcal{U}_{n,m}^r$ be the uni-cyclic multigraph having $n$
edges including $r$ multiple edges and a cycle of length $m$ with the edge set $E$, given above.
A subset $E(T_{wi_w})\subset E$ will belong to $s(\mathcal{U}_{n,m}^r)$ if and only if
$T_{wi_w}=
\{e_{1i_1},\ldots,e_{r'i_{r'}},\\e_{(r'+1)1},\ldots,e_{m1},e_{(m+1)i_{(m+1)}},\ldots,e_{(m+r'')i_{(m+r'')}},e_1,\ldots,e_v\}\backslash
\{e_{wi_w}\}$
for some $i_h\in \{1,\ldots,t_h\}$ with $1\leq h\leq r'$,
$m+1\leq h\leq m+r''$ and ${i_w}\in \{1,\ldots,t_w\}$ with
$1\leq w\leq r'$ or $i_w=1$ with
$r'+1\leq w\leq m$ for some $w=h$ and $i_w=i_h$ appeared in $T_{wi_w}$.
\end{Lemma}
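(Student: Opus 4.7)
The plan is to prove the biconditional by combining the standard characterizations of spanning trees (``connected, acyclic, $|V|-1$ edges'') with the specific structure of $\mathcal{U}_{n,m}^{r}$. First I observe that contracting each parallel class of edges to a single representative yields the underlying simple uni-cyclic graph, which has cycle length $m$ and satisfies $|V|=|E|=m+r''+v$. Consequently every spanning tree of $\mathcal{U}_{n,m}^{r}$ has exactly $m+r''+v-1$ edges, which matches the cardinality of the sets $T_{w i_w}$ displayed in the lemma.

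For the ``if'' direction, given admissible indices $(i_1,\ldots,i_{r'},i_{m+1},\ldots,i_{m+r''})$ and a cycle slot $(w,i_w)$, consider
\[
T'=\{e_{1i_1},\ldots,e_{r'i_{r'}},e_{(r'+1)1},\ldots,e_{m1},e_{(m+1)i_{m+1}},\ldots,e_{(m+r'')i_{m+r''}},e_1,\ldots,e_v\}.
\]
Since $T'$ picks exactly one edge from each parallel class, the subgraph it spans is isomorphic (as a graph) to the underlying simple uni-cyclic graph, so it covers $V$ and has precisely one cycle, namely the $m$-cycle built from the chosen cycle representatives. Deleting the cycle-edge $e_{w i_w}$ therefore produces a connected acyclic spanning subgraph, i.e.\ a spanning tree.

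For the ``only if'' direction, let $S\in s(\mathcal{U}_{n,m}^{r})$ be arbitrary and extract its form by three local observations. First, $S$ contains at most one edge from each parallel class, for otherwise a pair of parallel edges would bound a $2$-cycle in $S$. Second, each single edge outside the cycle is a bridge of $\mathcal{U}_{n,m}^{r}$, so all of $e_1,\ldots,e_v$ must belong to $S$. Third, for every outside-cycle parallel class $\{e_{j1},\ldots,e_{jt_j}\}$ with $m+1\le j\le m+r''$, its two endpoints become separated once all $e_{jk}$ are removed (since everything else outside the cycle is a bridge); combined with the first observation, this forces $S$ to contain exactly one $e_{j i_j}$ per such class.

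The remaining analysis concerns the $m$ cycle ``slots''. The observations above already account for $r''+v$ edges of $S$, so $S$ contributes exactly $m-1$ further edges, each drawn from a distinct cycle slot. Deleting any two cycle slots disconnects the unique cycle of the underlying simple graph and hence disconnects $S$, so the omitted slot is unique; writing it as $e_{w i_w}$ (with $i_w=1$ when the slot is a single cycle edge) yields precisely the expression in the statement. The only real obstacle is bookkeeping --- tracking the four edge categories (single/multiple inside or outside the cycle) simultaneously --- which is handled uniformly by reducing everything to the classical fact that the spanning trees of a simple uni-cyclic graph are obtained by removing exactly one edge from its unique cycle.
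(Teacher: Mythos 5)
Your proof is correct and reaches the lemma by the same underlying decomposition the paper uses (choose one edge from each parallel class, then delete one edge of the resulting cycle); the paper's own proof is essentially a one-line appeal to the ``cutting-down method'' of Harary, whereas you supply the full verification of both directions via bridges, the $2$-cycle observation for parallel classes, and the $|V|-1$ edge count. The only nitpick is terminological: you say ``contracting'' each parallel class when you mean replacing it by a single representative edge, but this does not affect the argument.
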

\begin{proof}
 By cutting down
method \cite{FH}, the spanning trees of $\mathcal{U}_{n,m}^r$ can be
obtained by removing exactly $t_h-1$ edges from each multiple edge such that $1\leq h\leq r'$,
$m+1\leq h\leq m+r''$
and in addition, an edge from the resulting cycle need to be removed. Therefore, the spanning trees will be of the form
$T_{wi_w}=
\{e_{1i_1},\ldots,e_{r'i_{r'}},e_{(r'+1)1},\ldots,e_{m1},e_{(m+1)i_{(m+1)}},\ldots,e_{(m+r'')i_{(m+r'')}},e_1,\ldots,e_v\}\backslash
\{e_{wi_w}\}$
for some $i_h\in \{1,\ldots,t_h\}$ with $1\leq h\leq r'$,
$m+1\leq h\leq m+r''$ and ${i_w}\in \{1,\ldots,t_w\}$ with
$1\leq w\leq r'$ or $i_w=1$ with
$r'+1\leq w\leq m$ for some $w=h$ and $i_w=i_h$ appeared in $T_{wi_w}$.
\end{proof}

In the following result, we give the primary decomposition of facet ideal
$I_\mathcal{F}(\Delta_s(\mathcal{U}_{n,m}^r)$.

\begin{Proposition}\label{p1} \rm{Let
$\Delta_s(\mathcal{U}_{n,m}^r)$ be the spanning simplicial complex
of uni-cyclic multigraph $\mathcal{U}_{n,m}^r$
having $n$ edges including $r$ multiple edges within and outside the cycle of length $m$. Then,\\
$I_\mathcal{F}(\Delta_s(\mathcal{U}_{n,m}^r)=$ $$\ \
\bigg(\bigcap\limits_{1\leq a \leq v}(x_a)\bigg)\bigcap
\bigg(\bigcap\limits_{1\leq i\leq r'\,;\,r'+1\leq k\leq
m}(x_{i1},\ldots,x_{it_i},x_{k1})\bigg)\bigcap
\bigg(\bigcap\limits_{r'+1\leq k< l\leq m}(x_{k1},
x_{l1})\bigg)$$
$$ \ \bigcap \bigg(\bigcap\limits_{1\leq i < b\leq
r'}(x_{i1},\ldots,x_{it_i}, x_{b1},\ldots,x_{bt_b})\bigg)\bigcap
\bigg(\bigcap\limits_{m+1\leq j\leq
m+r''}(x_{j1},\ldots,x_{jt_j})\bigg),$$ where $t_i$ with $1\leq i\leq
r'$ is the number of multiple edges appeared in the $i$-th edge of
the cycle and $t_j$ with $m+1\leq j\leq m+r''$ is the number of
multiple edges appeared in the $j$th-edge outside the cycle}.
\end{Proposition}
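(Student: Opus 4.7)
The plan is to use the standard correspondence between the primary decomposition of a squarefree monomial (facet) ideal and the minimal vertex covers of the underlying simplicial complex: for a complex $\Delta$ with facets $F_1,\dots,F_q$,
\[
I_\mathcal{F}(\Delta)=\bigcap_{C}(x_i : i\in C),
\]
where $C$ runs over the minimal vertex covers of $\Delta$. Thus the proof reduces to enumerating all minimal vertex covers of $\Delta_s(\mathcal{U}_{n,m}^r)$ and checking that they are exactly the five families of subsets of $E$ indexing the intersections on the right-hand side of the claimed decomposition.

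Using Lemma~\ref{l1}, every facet $T_{wi_w}$ has the following concrete shape: it picks exactly one edge from each multiple-edge class (both inside and outside the cycle), it contains every single edge $e_{(r'+1)1},\dots,e_{m1}$ of the cycle and every single edge $e_1,\dots,e_v$ outside the cycle, and then deletes exactly one cycle edge $e_{wi_w}$. With this description in hand I would compute the minimal vertex covers by direct case analysis. First, each single outside edge $e_a$, $1\le a\le v$, lies in every facet, so $\{e_a\}$ is already a (one–element) minimal cover, contributing the factors $(x_a)$. Next, for each outside multiple class $j$ with $m+1\le j\le m+r''$, every facet contains exactly one edge from $\{e_{j1},\dots,e_{jt_j}\}$ (since the removed edge $e_{wi_w}$ always sits on the cycle, never in an outside class), so $\{e_{j1},\dots,e_{jt_j}\}$ is a minimal cover, contributing $(x_{j1},\dots,x_{jt_j})$.

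The cycle edges require a little more care: removing a cycle edge is what breaks the cycle, and therefore no cover consisting of edges coming from only one cycle ``slot'' can work. For instance, the set $\{e_{i1},\dots,e_{it_i}\}$ with $1\le i\le r'$ fails to meet the tree $T_{ii_i}$, and a single $\{e_{k1}\}$ with $r'+1\le k\le m$ fails to meet $T_{k1}$. I would then check that any pair of cycle ``slots'' does give a minimal cover: combining two multiple classes, a multiple class with a single cycle edge, or two single cycle edges, one verifies slot-by-slot that every facet $T_{wi_w}$ (with $w$ in one of the two chosen slots or outside them) is hit, and that dropping any single generator destroys the covering property. These three subcases contribute exactly the intersections $(x_{i1},\dots,x_{it_i},x_{b1},\dots,x_{bt_b})$, $(x_{i1},\dots,x_{it_i},x_{k1})$, and $(x_{k1},x_{l1})$.

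Finally I would show there are no further minimal covers: any vertex cover must contain one of the sets above (a single outside edge, an entire outside class, or a pair of cycle slots in one of the three admissible configurations), so after enforcing minimality nothing new appears. Assembling the five families into one intersection then gives the stated formula. The main obstacle is the cycle-edge analysis — specifically, showing that one must pick two cycle slots and, within each multiple slot chosen, one must take the whole multiplicity class; this is really the combinatorial heart of the argument, while the outside edges are routine.
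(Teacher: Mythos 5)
Your proposal is correct and follows essentially the same route as the paper: both reduce the primary decomposition to enumerating the minimal vertex covers of $\Delta_s(\mathcal{U}_{n,m}^r)$ via the standard correspondence (the paper cites Villarreal, Proposition 1.8), and then identify the same five families of covers by a case analysis on the structure of the spanning trees from Lemma \ref{l1}. If anything, you are slightly more explicit than the paper about the final completeness step (that no other minimal covers exist), which the paper leaves implicit.
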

\begin{proof}
Let $I_\mathcal{F}(\Delta_s(\mathcal{U}_{n,m}^r)$ be the facet ideal
of the spanning simplicial complex $\Delta_s(\mathcal{U}_{n,m}^r)$.
From (\cite{RHV}, Proposition 1.8), minimal prime ideals of the
facet ideal $I_\mathcal{F}(\Delta)$ have one-to-one correspondence
with the minimal vertex covers of the simplicial complex $\Delta$.
Therefore, in order to find the primary decomposition of the facet
ideal $I_\mathcal{F}(\Delta_s(\mathcal{U}_{n,m}^r))$; it is
sufficient to find all the minimal vertex covers of
$\Delta_s(\mathcal{U}_{n,m}^r)$.\\
\indent As $e_a$, $a\in [v]$ is not an edge of the cycle of
uni-cyclic multigraph $\mathcal{U}_{n,m}^r$ and does not belong to
any multiple edge of $\mathcal{U}_{n,m}^r$. Therefore, it is clear
by definition of minimal vertex cover that $\{e_a\}$, $1\leq a\leq
v$ is a minimal vertex cover of $\Delta_s(\mathcal{U}_{n,m}^r)$.
Moreover, a spanning tree is obtained by removing exactly ${t_h}-1$
edges from each multiple edge with $h=1,\ldots,r',m+1,\ldots,m+r''$ and in addition, an edge from
the resulting cycle of $\mathcal{U}_{n,m}^r$.
We illustrate the result into the following cases.\\
{\bf Case 1.}
If atleast one multiple edge is appeared in the cycle of $\mathcal{U}_{n,m}^r$. Then, we cannot remove one complete multiple edge and one single edge from the cycle
of $\mathcal{U}_{n,m}^r$ to get spanning tree. Therefore,
$(x_{i1},\ldots,x_{it_i}, x_{k1})$ with $1\leq i\leq r'$, $r'+1\leq
k\leq m$ is a minimal vertex cover of the spanning simplicial
complex $\Delta_s(\mathcal{U}_{n,m}^r)$ having non-empty
intersection with all the spanning trees of $\mathcal{U}_{n,m}^r$.
Moreover, two single edges cannot be removed from the cycle of
$\mathcal{U}_{n,m}^r$ to get spanning tree.
Consequently, $(x_{k1}, x_{l1})$ for $r'+1\leq k < l\leq m$ is a
minimal vertex cover of $\Delta_s(\mathcal{U}_{n,m}^r)$
having non-empty intersection with all the spanning trees of $\mathcal{U}_{n,m}^r$.\\
{\bf Case 2.}
If atleast two multiple edges are appeared in the cycle of $\mathcal{U}_{n,m}^r$. Then, two complete multiple edges cannot be
removed from the cycle of $\mathcal{U}_{n,m}^r$ to get spanning
tree. Consequently,
$(x_{i1},\ldots,x_{it_i},x_{b1},\ldots,x_{bt_b})$ for $1\leq i<
b\leq r'$ is a minimal vertex cover of $\Delta_s(\mathcal{U}_{n,m}^r)$ having non-empty
intersection with all the spanning trees of $\mathcal{U}_{n,m}^r$.\\
{\bf Case 3.}
If atleast one multiple edge appeared outside the cycle of
$\mathcal{U}_{n,m}^r$. Then, one complete multiple edge outside the cycle of $\mathcal{U}_{n,m}^r$ cannot
be removed to get spanning tree. So, $(x_{j1},\ldots,x_{jt_j})$ for
$m+1\leq j\leq m+r''$ is a minimal vertex cover of $\Delta_s(\mathcal{U}_{n,m}^r)$ having non-empty
intersection with all the spanning trees of
$\mathcal{U}_{n,m}^r$. This completes the proof.
\end{proof}

We give now formula for Euler characteristic of $\Delta_s(\mathcal{U}_{n,m}^r)$.

\begin{Theorem}\label{t1}
\rm{Let $\Delta_s(\mathcal{U}_{n,m}^r)$ be spanning simplicial
complex of uni-cyclic multigraph $\mathcal{U}_{n,m}^r$
having $n$ edges including $r$ multiple edges and a cycle of length
$m$. Then, $dim(\Delta_s(\mathcal{U}_{n,m}^r))=
n-\sum\limits_{i=1}^{r'}{t_i}-\sum\limits_{j=m+1}^{m+r''}{t_j}+r-2$
and the Euler characteristic of $\Delta_s(\mathcal{U}_{n,m}^r)$ is given by\\
$\chi(\Delta_s(\mathcal{U}_{n,m}^r))=\sum\limits_{i=0}^{n-1}(-1)^i\bigg[{n\choose
i+1}-\prod\limits_{i=1}^{r'}{t_i}\bigg[{{n-\alpha+r'-m} \choose
{i+1-m}}\\-\sum\limits_{j=2}^{\beta}\bigg({\beta\choose
j}-{\bigg(\sum\limits_{m+1\leq {i_1}<\ldots<{i_j\leq
m+r''}}}\prod\limits_{k=i_1}^{i_j}{{t_k}\choose
1}\bigg)\bigg)\sum\limits_{l=j}^{\beta}(-1)^{l-j}
{{\beta-j}\choose{l-j}}{{n-\alpha+r'-m-l} \choose
{i+1-m-l}}\bigg]$\\
$-\sum\limits_{j=2}^{\alpha+\beta}\bigg({\alpha+\beta\choose
j}-{\bigg(\sum\limits^{{i_j\notin}\{r'+1,\ldots,m\}}_{1\leq
{i_1}<\ldots<{i_j\leq
m+r''}}}\prod\limits_{k=i_1}^{i_j}{{t_k}\choose
1}\bigg)\bigg)\sum\limits_{l=j}^{\alpha+\beta}(-1)^{l-j}
{{\alpha+\beta-j}\choose{l-j}}{{n-l} \choose {i+1-l}}\bigg]$\\
with $\alpha=\sum\limits_{i=1}^{r'}{t_i}$ and $\beta=\sum\limits_{j=m+1}^{m+r''}{t_j}$ such that $r'$ and $r''$ are the number of multiple edges appeared within and outside the
cycle, respectively.}
\end{Theorem}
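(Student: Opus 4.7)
The plan is to compute the face numbers $f_i$ of $\Delta_s(\mathcal{U}_{n,m}^r)$ by inclusion-exclusion over the circuits of $\mathcal{U}_{n,m}^r$ and then substitute into the identity $\chi(\Delta)=\sum_{i=0}^{d}(-1)^{i}f_i$ derived in the preliminaries. The dimension assertion is a direct consequence of Lemma~\ref{l1}: each spanning tree is obtained by deleting $t_h-1$ edges from every multi-edge (both inside and outside the cycle) and one further edge from the resulting main cycle, so it has cardinality $n-(\alpha-r')-(\beta-r'')-1=n-\alpha-\beta+r-1$ and therefore $\dim\Delta_s(\mathcal{U}_{n,m}^r)=n-\alpha-\beta+r-2$.

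A subset $F\subseteq E$ of size $i+1$ is a face of $\Delta_s(\mathcal{U}_{n,m}^r)$ if and only if $F$ is acyclic in $\mathcal{U}_{n,m}^r$. The elementary circuits come in two families: the $\prod_{i=1}^{r'}t_i$ variants of the main $m$-cycle (one for each way of selecting a single edge from each within-cycle multi-edge) and the $\binom{t_h}{2}$ two-element circuits arising at each multi-edge position $h$. Hence $f_i=\binom{n}{i+1}-|\mathcal{B}_i|$, where $\mathcal{B}_i$ is the collection of bad $(i+1)$-subsets containing at least one circuit; I partition $\mathcal{B}_i$ disjointly into subsets that contain some main cycle variant (Part~A) and subsets that contain no main cycle variant but do contain at least one 2-cycle from some multi-edge (Part~B).

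For Part~A, I fix one of the $\prod_{i=1}^{r'}t_i$ main-cycle variants (using $m$ of the $i+1$ chosen edges) and select the remaining $i+1-m$ edges from the $n-m-(\alpha-r')=n-\alpha+r'-m$ edges that lie neither on this cycle nor among the extra copies of within-cycle multi-edges. A further inclusion-exclusion across the $\beta$ outside-cycle multi-edge edges, indexed by $j=2,\ldots,\beta$ and refined by $l=j,\ldots,\beta$ (using the factor $\binom{\beta}{j}-\sum\prod\binom{t_k}{1}$ to isolate configurations that double up some outside multi-edge), accounts for the overcount from subsets that simultaneously contain an outside 2-cycle. For Part~B, a parallel inclusion-exclusion on parallel-pair constraints across all $\alpha+\beta$ multi-edge edges, indexed by $j=2,\ldots,\alpha+\beta$ and $l=j,\ldots,\alpha+\beta$, together with the factor $\binom{\alpha+\beta}{j}-\sum^{i_j\notin\{r'+1,\ldots,m\}}\prod\binom{t_k}{1}$ that rules out configurations with at most one edge per multi-edge position, completes the count. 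Substituting the resulting expression for $f_i$ into $\chi=\sum_{i=0}^{n-1}(-1)^{i}f_i$, with the upper limit safely extended to $n-1$ since $f_i=0$ beyond the dimension, yields the claimed closed-form expression.

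The main obstacle is the intricate inclusion-exclusion bookkeeping caused by the overlap between main-cycle variants and within-cycle multi-edge pairs: every main-cycle variant shares an edge with each within-cycle multi-edge, so a direct inclusion-exclusion over all circuits would involve heavily overlapping terms. The key simplification is the disjoint partition $\mathcal{B}_i=(\text{Part A})\sqcup(\text{Part B})$ above, which isolates the main-cycle contribution (carrying the distinctive factor $\prod t_i$) from the pair-only contribution. Verifying that the two contributions assemble into the stated closed form without double-counting, and that the nested inclusion-exclusion on outside-cycle multi-edges in Part~A is correctly oriented, requires careful index tracking and is where the bulk of the computational work lies.
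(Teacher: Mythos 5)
Your overall strategy coincides with the paper's: get the dimension from Lemma \ref{l1}, compute each $f_i$ by inclusion--exclusion over the circuits of the multigraph (the $\prod_{i=1}^{r'}t_i$ variants of the $m$-cycle and the parallel pairs at each multi-edge position), and substitute into $\chi=\sum_i(-1)^if_i$. The dimension computation is correct. But the disjoint partition of the bad sets that you single out as the key simplification is not the partition that the stated formula implements, and as written it loses a class of bad subsets. The theorem's formula decomposes $\mathcal{B}_i$ as (subsets containing a cycle variant but \emph{no} parallel pair) $\sqcup$ (subsets containing at least one parallel pair, with no restriction on whether they also contain a cycle): the second group is exactly what the $\sum_{j=2}^{\alpha+\beta}\bigl(\binom{\alpha+\beta}{j}-\sum\prod\binom{t_k}{1}\bigr)(\cdots)$ block counts, stratified by the number $j$ of multi-edge copies present. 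Your partition instead is (contains a main-cycle variant) $\sqcup$ (contains no main-cycle variant but contains a parallel pair). Under your partition, a subset consisting of a full cycle variant together with an extra copy of some within-cycle multi-edge belongs to Part~A; yet your count of Part~A explicitly forbids the remaining $i+1-m$ edges from being ``extra copies of within-cycle multi-edges,'' so such subsets are not counted there, and they are excluded from your Part~B by definition. The same problem occurs for subsets containing both a cycle variant and an outside parallel pair, which your Part~A computation deliberately subtracts off. These bad subsets are therefore counted nowhere, so $|\mathcal{B}_i|$ comes out too small.

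Concretely, the computation you sketch for Part~A actually evaluates ``contains a cycle and no parallel pair'' (the paper's second term), not ``contains a main-cycle variant''; and your Part~B, if computed as defined, would require imposing an additional ``acyclic'' condition that the formula's third block does not impose. To repair the argument you should replace your partition by the one the formula encodes: a bad set either contains some parallel pair (handled by the $\alpha+\beta$ block, cycle or not), or it contains no parallel pair, in which case it must contain exactly one cycle variant (handled by the $\prod t_i$ block with the outside-pair corrections). With that change the two blocks are genuinely disjoint and exhaustive, and the rest of your index bookkeeping goes through as in the paper.
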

\begin{proof}Let
$E=\{e_{11},\ldots,e_{1t_1},\ldots,e_{r'1},\ldots,e_{r't_{r'}},
e_{(r'+1)1},\ldots,e_{m1}, e_{(m+1)1},\ldots,\\e_{(m+1)t_{m+1}},\ldots,
e_{(m+r'')1},\ldots,e_{(m+r'')t_{m+r''}},e_1, \ldots, e_v\}$ be the
edge set of uni-cyclic multigraph $\mathcal{U}_{n,m}^r$ having $n$
edges including $r$ multiple edges and a cycle of length $m$ such that
$r'$ and $r''$ are the number of multiple edges appeared within and outside the
cycle, respectively.

One can easily see that each facet $T_{wi_w}$ of $\Delta_s(\mathcal{U}_{n,m}^r)$ is of the same dimension
$n-\sum\limits_{i=1}^{r'}{t_i}-\sum\limits_{j=m+1}^{m+r''}{t_j}+r-2=n-\alpha-\beta+r-2$ with
$\alpha=\sum\limits_{i=1}^{r'}{t_i}$ and $\beta=\sum\limits_{j=m+1}^{m+r''}{t_j}$, see Lemma \ref{l1}.

By definition, $f_i$ is the number of subsets of $E$ with $i+1$ elements not containing cycle and multiple edges.
There are $\prod\limits_{i=1}^{r'}{t_i}{{n-\alpha+r'-m} \choose {i+1-m}}$
number of subsets of $E$ containing cycle but not containing any
multiple edge within the cycle. There are
$${{n-\alpha+r'-m-\beta} \choose {i+1-m-\beta}}={\beta\choose \beta}\sum\limits_{l=\beta}^{\beta}(-1)^{l-\beta}
{{\beta-\beta}\choose{l-\beta}}{{n-\alpha+r'-m-l} \choose {i+1-m-l}}$$
subsets of $E$ containing cycle and $\beta$ multiple edges of
$\mathcal{U}_{n,m}^r$ outside the cycle but not containing any
multiple edge within the cycle. There are
$${\beta\choose \beta-1}\bigg[{{n-\alpha+r'-m-(\beta-1)}\choose {i+1-m-(\beta-1)}}-{{n-\alpha+r'-m-\beta}\choose
{i+1-m-\beta}}\bigg]$$
$$={\beta\choose \beta-1}\sum\limits_{l=\beta-1}^{\beta}(-1)^{l-(\beta-1)}{{\beta-(\beta-1)}\choose{l-(\beta-1)}}{{n-\alpha+r'-m-l}\choose {i+1-m-l}}$$
subsets of $E$ containing cycle and $\beta-1$ multiple edges of
$\mathcal{U}_{n,m}^r$ outside the cycle but not containing any
multiple edge within the cycle. Continuing in similar manner, the number of subsets
of $E$ containing cycle and two edges from a multiple edge outside the cycle but not containing any multiple edge within the cycle is given by\\
${{\beta-2}\choose 0}{{n-\alpha+r'-m-2}\choose
{i+1-m-2}}-{{\beta-2}\choose 1}{{n-\alpha+r'-m-3}\choose
{i+1-m-3}}+{{\beta-2}\choose 2}{{n-\alpha+r'-m-4}\choose
{i+1-m-4}}+\cdots
\\+ (-1)^{\beta-2}{{\beta-2}\choose {\beta-2}}{{n-\alpha+r'-m-\beta}\choose
{i+1-m-\beta}}=\sum\limits_{l=2}^{\beta}(-1)^{l-2}
{{\beta-2}\choose{l-2}}{{n-\alpha+r'-m-l}\choose{i+1-m-l}}.$ \\There
are $\bigg({\beta\choose 2}-{\bigg(\sum\limits_{m+1\leq {i_1}<{i_2}\leq
m+r''}}\prod\limits_{k=i_1}^{i_2}{{t_k}\choose 1}\bigg)\bigg)$ choices of
two edges from a multiple edge outside the cycle. Therefore,
we obtain
$$\bigg({\beta\choose 2}-{\bigg(\sum\limits_{m+1\leq {i_1}<{i_2}\leq
m+r''}}\prod\limits_{k=i_1}^{i_2}{{t_k}\choose
1}\bigg)\bigg)\sum\limits_{l=2}^{\beta}(-1)^{l-2}
{{\beta-2}\choose{l-2}}{{n-\alpha+r'-m-l}\choose{i+1-m-l}}$$
the number of subsets of $E$ with $i+1$ elements containing cycle and all possible choices of two edges from
a multiple edge outside the cycle but not containing multiple edges within the cycle.

Now, we use inclusion exclusion principal to obtain \\
(number of subsets of $E$ containing cycle but not containing
multiple edges)= $\prod\limits_{i=1}^{r'}{t_i}$[(number of subsets
of $E$ with $i+1$ elements containing cycle but not containing
multiple edges within the cycle)$-$(number of subsets of $E$ with $i+1$
elements containing cycle and $\beta$ multiple edges outside the
cycle but not containing multiple edges within the cycle)$-$(number of
subsets of $E$ with $i+1$ elements containing cycle and $\beta-1$
multiple edges outside the cycle but not containing multiple edges
within the cycle)$-\cdots-$(number of subsets of $E$ with $i+1$ elements
containing cycle and two edges from a multiple edge
outside the cycle but not containing multiple edges within
the cycle)]\\
$= \prod\limits_{i=1}^{r'}{t_i}\bigg[{{n-\alpha+r'-m} \choose
{i+1-m}}-{\beta\choose
\beta}\sum\limits_{l=\beta}^{\beta}(-1)^{l-\beta}
{{\beta-\beta}\choose{l-\beta}}{{n-\alpha+r'-m-l} \choose
{i+1-m-l}}$\\ $- {\beta\choose
\beta-1}\sum\limits_{l=\beta-1}^{\beta}(-1)^{l-(\beta-1)}{{\beta-(\beta-1)}\choose{l-(\beta-1)}}{{n-\alpha+r'-m-l}\choose
{i+1-m-l}}$
\\ $-\cdots-\bigg({\beta\choose 2}-{\bigg(\sum\limits_{m+1\leq {i_1}<{i_2}\leq
m+r''}}\prod\limits_{k=i_1}^{i_2}{{t_k}\choose
1}\bigg)\bigg)\sum\limits_{l=2}^{\beta}(-1)^{l-2}
{{\beta-2}\choose{l-2}}{{n-\alpha+r'-m-l}\choose{i+1-m-l}}\bigg]$\\
$=\prod\limits_{i=1}^{r'}{t_i}\bigg[{{n-\alpha+r'-m} \choose
{i+1-m}}\\-\sum\limits_{j=2}^{\beta}\bigg({\beta\choose
j}-{\bigg(\sum\limits_{m+1\leq {i_1}<\ldots<{i_j\leq
m+r''}}}\prod\limits_{k=i_1}^{i_j}{{t_k}\choose
1}\bigg)\bigg)\sum\limits_{l=j}^{\beta}(-1)^{l-j}
{{\beta-j}\choose{l-j}}{{n-\alpha+r'-m-l} \choose
{i+1-m-l}}\bigg]$.

Therefore, we compute
\\$f_i=$ (number of subsets of $E$ with $i+1$ elements)$-$(number of
subsets of $E$ with $i+1$ elements containing cycle but not
containing multiple edges)$-$(number of subsets of $E$ with $i+1$
elements containing $\alpha+\beta$ multiple edges)$-$(number of
subsets of $E$ with $i+1$ elements containing $\alpha+\beta-1$
multiple edges)$-\cdots-$(number of subsets of $E$ with $i+1$
elements
containing two edges from a multiple edge of $\mathcal{U}_{n,m}^r$)\\
$={n\choose i+1}-\prod\limits_{i=1}^{r'}{t_i}\bigg[{{n-\alpha+r'-m}
\choose {i+1-m}}\\-\sum\limits_{j=2}^{\beta}\bigg({\beta\choose
j}-{\bigg(\sum\limits_{m+1\leq {i_1}<\ldots<{i_j\leq
m+r''}}}\prod\limits_{k=i_1}^{i_j}{{t_k}\choose
1}\bigg)\bigg)\sum\limits_{l=j}^{\beta}(-1)^{l-j}
{{\beta-j}\choose{l-j}}{{n-\alpha+r'-m-l} \choose
{i+1-m-l}}\bigg]$\\
$-{\alpha+\beta\choose
\alpha+\beta}\sum\limits_{l=\alpha+\beta}^{\alpha+\beta}(-1)^{l-(\alpha+\beta)}
{{{(\alpha+\beta)}-{(\alpha+\beta)}}\choose{l-{(\alpha+\beta)}}}{{n-l}
\choose {i+1-l}}$\\ $- {\alpha+\beta\choose
\alpha+\beta-1}\sum\limits_{l=\alpha+\beta-1}^{\alpha+\beta}(-1)^{l-(\alpha+\beta-1)}
{{(\alpha+\beta)-(\alpha+\beta-1)}\choose{l-(\alpha+\beta-1)}}{{n-l}\choose
{i+1-l}}$
\\ $-\cdots-\bigg({\alpha+\beta\choose 2}-{\bigg(\sum\limits^{{i_j\notin}\{r'+1,\ldots,m\}}_{1\leq {i_1}<{i_2}\leq
m+r''}}\prod\limits_{k=i_1}^{i_2}{{t_k}\choose
1}\bigg)\bigg)\sum\limits_{l=2}^{\alpha+\beta}(-1)^{l-2}
{{\alpha+\beta-2}\choose{l-2}}{{n-l}\choose{i+1-l}}$\\
$={n\choose i+1}-\prod\limits_{i=1}^{r'}{t_i}\bigg[{{n-\alpha+r'-m}
\choose {i+1-m}}\\-\sum\limits_{j=2}^{\beta}\bigg({\beta\choose
j}-{\bigg(\sum\limits_{m+1\leq {i_1}<\ldots<{i_j\leq
m+r''}}}\prod\limits_{k=i_1}^{i_j}{{t_k}\choose
1}\bigg)\bigg)\sum\limits_{l=j}^{\beta}(-1)^{l-j}
{{\beta-j}\choose{l-j}}{{n-\alpha+r'-m-l} \choose
{i+1-m-l}}\bigg]$ \\
$-\sum\limits_{j=2}^{\alpha+\beta}\bigg({\alpha+\beta\choose
j}-{\bigg(\sum\limits^{{i_j\notin}\{r'+1,\ldots,m\}}_{1\leq
{i_1}<\ldots<{i_j\leq
m+r''}}}\prod\limits_{k=i_1}^{i_j}{{t_k}\choose
1}\bigg)\bigg)\sum\limits_{l=j}^{\alpha+\beta}(-1)^{l-j}
{{\alpha+\beta-j}\choose{l-j}}{{n-l} \choose {i+1-l}}$.
\end{proof}

\begin{figure}[htbp]
        \centerline{\includegraphics[width=10.0cm]{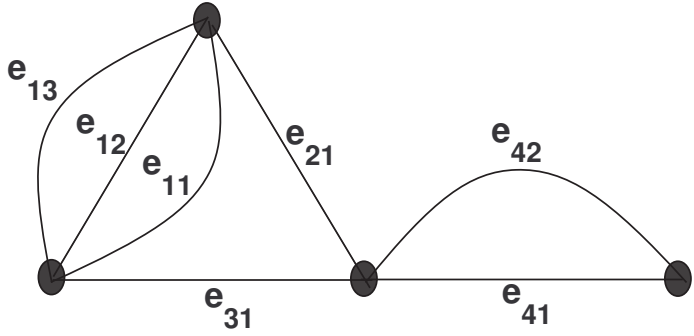}}
        \caption{$\mathcal{U}_{7,3}^2$}
        \label{fig:f1}
\end{figure}

\begin{Example}
\rm{Let $E=\{e_{11}, e_{12}, e_{13}, e_{21}, e_{31}, e_{41},
e_{42}\}$ be the edge set of uni-cyclic multigraph
$\mathcal{U}_{7,3}^2$ having $7$ edges including $2$ multiple edges
and a cycle
of length $3$, as shown in Figure ~\ref{fig:f1}. By cutting-down method, we obtain\\
$s(\mathcal{U}_{7,3}^2)=\{ \{e_{21},e_{31},e_{41}\},
\{e_{11},e_{31},e_{41}\}, \{e_{11},e_{21},e_{41}\},
\{e_{12},e_{31},e_{41}\}, \{e_{12},e_{21},e_{41}\},\\
\{e_{13},e_{31},e_{41}\}, \{e_{13},e_{21},e_{41}\},
\{e_{21},e_{31},e_{42}\}, \{e_{11},e_{31},e_{42}\},
\{e_{11},e_{21},e_{42}\}, \{e_{12},e_{31},e_{42}\},\\
\{e_{12},e_{21},e_{42}\}, \{e_{13},e_{31},e_{42}\},
\{e_{13},e_{21},e_{42}\}\}$.\\
By definition, $f_i$ is the number of subsets of $E$ with
$i+1$ elements not containing cycle and multiple edges. Since,
$\{e_{11}\}, \{e_{12}\}, \{e_{13}\}, \{e_{21}\}, \{e_{31}\},
\{e_{41}\}$\\ and $\{e_{42}\}$ are subsets of $E$ containing one
element. It implies that $f_0=7$. There are $\{e_{11},e_{21}\},
\{e_{11},e_{31}\}, \{e_{11},e_{41}\}, \{e_{11},e_{42}\},
\{e_{12},e_{21}\}, \{e_{12},e_{31}\}, \{e_{12},e_{41}\},\\
\{e_{12},e_{42}\}, \{e_{13},e_{21}\}, \{e_{13},e_{31}\},
\{e_{13},e_{41}\}, \{e_{13},e_{42}\}, \{e_{21},e_{31}\},
\{e_{21},e_{41}\}, \{e_{21}, e_{42}\},\\
\{e_{31},e_{41}\}, \{e_{31}, e_{42}\}$ subsets of $E$ containing two
elements but not containing cycle and multiple edges. So, $f_1= 17$.
We know that the spanning trees of $\mathcal{U}_{7,3}^2$ are
2-dimensional facets of the spanning simplicial complex
$\Delta_s(\mathcal{U}_{7,3}^2)$. Therefore, $f_2= 14$. Thus,
$$\chi(\Delta_s(\mathcal{U}_{7,3}^2))= f_0 - f_1 + f_2=7 - 17 + 14 = 4.$$
Now, we compute the Euler characteristic of $\Delta_s(\mathcal{U}_{7,3}^2)$ by using Theorem ~\ref{t1}.
We observe that, $n=7$, $r'=1$, $r''=1$, $r=2$, $m=3$, $\alpha=3$,
$\beta=2$ and $0\leq i\leq d$, where $d=n-\alpha-\beta+r-2=2$ is the
dimension of
$\Delta_s(\mathcal{U}_{7,3}^2)$.\\
By substituting these values in Theorem ~\ref{t1}, we get\\
$f_i={7\choose i+1}-3\bigg[{{2} \choose {i+1-3}}-{2\choose 2}{{0}
\choose {i+1-5}}\bigg]-[{5\choose 2}-{{3\choose 1}{2\choose
1}}]\bigg({3\choose0}{5 \choose {i+1-2}}-{3\choose1}{4 \choose
{i+1-3}}+{3\choose2}{3 \choose {i+1-4}}-{3\choose3}{2 \choose
{i+1-4}}\bigg)-{5\choose 3}{\bigg({2\choose0}{4 \choose
{i+1-3}}-{2\choose1}{3 \choose {i+1-4}}}+{2\choose2}{2 \choose
{i+1-5}}\bigg)-\\{5\choose 4}\bigg({1\choose0}{3 \choose
{i+1-4}}-{1\choose1}{2 \choose {i+1-5}}\bigg)-{5\choose 5}{2 \choose
{i+1-5}}$.
\\
Alternatively, we compute $(f_0,f_1,f_2)=(7, 17, 14)$
and
$\chi(\Delta_s(\mathcal{U}_{7,3}^2))=4$.

We compute now the Betti numbers of $\Delta_s(\mathcal{U}_{7,3}^2)$. The facet ideal of $\Delta_s(\mathcal{U}_{7,3}^2)$ is given by\\
$I_\mathcal{F}(\Delta_s(\mathcal{U}_{7,3}^2))=\langle
x_{F_{21,31,41}},x_{F_{11,31,41}},x_{F_{11,21,41}},x_{F_{12,31,41}},x_{F_{12,21,41}}
,x_{F_{13,31,41}},x_{F_{13,21,41}},\\
x_{F_{21,31,42}},x_{F_{11,31,42}},x_{F_{11,21,42}},x_{F_{12,31,42}},x_{F_{12,21,42}},x_{F_{13,31,42}},x_{F_{13,21,42}}\rangle$.\\
We consider the chain complex of
$\Delta_s(\mathcal{U}_{7,3}^2)$
$$0\ \underrightarrow {\partial_3} \ C_2 \ \underrightarrow
{\partial_2} \ C_1 \ \underrightarrow {\partial_1} \ C_0
 \ \underrightarrow {\partial_0} \ 0$$
The homology groups are given by
$H_i=\frac{Ker(\partial_i)}{Im(\partial_{i+1})}$ with $i=0,1,2.$\\
Therefore, the $i$-th Betti number of
$\Delta_s(\mathcal{U}_{7,3}^2)$ is given by
$$\beta_i=rank(H_i)=rank(Ker(\partial_i))-rank(Im(\partial_{i+1})) \ \rm{with} \ i=0,1,2.$$
Now, we compute rank and nullity of the matrix $\partial_i$ of order
$f_i\times f_{i+1}$ with $i=0,1,2$.\\
The boundary homomorphism
$\partial_2:C_2(\Delta_s(\mathcal{U}_{7,3}^2))\rightarrow
C_1(\Delta_s(\mathcal{U}_{7,3}^2))$ can be expressed as\\
$\partial_2(F_{21,31,41})=F_{31,41}-F_{21,41}+F_{21,31}$;
$\partial_2(F_{11,31,41})=F_{31,41}-F_{11,41}+F_{11,31}$;
$\partial_2(F_{11,21,41})=F_{21,41}-F_{11,41}+F_{11,21}$;
$\partial_2(F_{12,31,41})=F_{31,41}-F_{12,41}+F_{12,31}$;
$\partial_2(F_{12,21,41})=F_{21,41}-F_{12,41}+F_{12,21}$;
$\partial_2(F_{13,31,41})=F_{31,41}-F_{13,41}+F_{13,31}$;
$\partial_2(F_{13,21,41})=F_{21,41}-F_{13,41}+F_{13,21}$;
$\partial_2(F_{21,31,42})=F_{31,42}-F_{21,42}+F_{21,31}$;
$\partial_2(F_{11,31,42})=F_{31,42}-F_{11,42}+F_{11,31}$;
$\partial_2(F_{11,21,42})=F_{21,42}-F_{11,42}+F_{11,21}$;
$\partial_2(F_{12,31,42})=F_{31,42} - F_{12,42} + F_{12,31}$;
$\partial_2(F_{12,21,42})=F_{21,42} - F_{12,42} + F_{12,21}$;
$\partial_2(F_{13,31,42})=F_{31,42}-F_{13,42}+F_{13,31}$;
$\partial_2(F_{13,21,42})=F_{21,42}-F_{13,42}+F_{13,21}$.\\
The
boundary homomorphism
$\partial_1:C_1(\Delta_s(\mathcal{U}_{7,3}^2))\rightarrow
C_0(\Delta_s(\mathcal{UM}_{7,3}^2))$ can be written as\\
$\partial_1(F_{11,21})=e_{21}-e_{11}$;
$\partial_1(F_{11,31})=e_{31}-e_{11}$;
$\partial_1(F_{11,41})=e_{41}-e_{11}$;
$\partial_1(F_{11,42})=e_{42}-e_{11}$;
$\partial_1(F_{12,21})=e_{21}-e_{12}$;
$\partial_1(F_{12,31})=e_{31}-e_{12}$;
$\partial_1(F_{12,41})=e_{41}-e_{12}$;
$\partial_1(F_{12,42})=e_{42}-e_{12}$;
$\partial_1(F_{13,21})=e_{21}-e_{13}$;
$\partial_1(F_{13,31})=e_{31}-e_{13}$;
$\partial_1(F_{13,41})=e_{41}-e_{13}$;
$\partial_1(F_{13,42})=e_{42}-e_{13}$;
$\partial_1(F_{21,31})=e_{31}-e_{21}$;
$\partial_1(F_{21,41})=e_{41}-e_{21}$;
$\partial_1(F_{21,42})=e_{42}-e_{21}$;
$\partial_1(F_{31,41})=e_{41}-e_{31}$;
$\partial_1(F_{31,42})=e_{42}-e_{31}$.\\ Then, by using MATLAB, we
compute rank of $\partial_2=11$; nullity of $\partial_2=3$; rank of
$\partial_1=6$; nullity of $\partial_1=11$.\\ Therefore, the Betti
numbers are given by
$\beta_0=rank(Ker(\partial_0))-rank(Im(\partial_{1}))=7-6=1$;
$\beta_1=rank(Ker(\partial_1))-rank(Im(\partial_{2}))=11-11=0$;
$\beta_2=rank(Ker(\partial_2))-rank(Im(\partial_{3}))=3-0=3$.\\
Alternatively, the Euler characteristic of $\Delta_s(\mathcal{U}_{7,3}^2)$ is given by
$$\chi(\Delta_s(\mathcal{U}_{7,3}^2))=\beta_0-\beta_1+\beta_2=1-0+3=4.$$}
\end{Example}

\end{document}